\numberwithin{equation}{section}
\newtheorem{Theorem}{Theorem}[section]
\newtheorem{Fact}[Theorem]{Fact}
\newtheorem{Proposition}[Theorem]{Proposition}
\theoremstyle{remark}
\newtheorem{Remark}[Theorem]{Remark}
\theoremstyle{definition}
\newtheorem{Definition}[Theorem]{Definition}
\newtheorem*{acknowledgements}{Acknowledgements}
\newcommand{\R}{{\mathbb R}}
\newcommand{\mc}[1]{{\mathcal #1}}
\newcommand{\mb}[1]{{\mathbf #1}}
\renewcommand{\phi}{\varphi}
\renewcommand{\epsilon}{\varepsilon}
\newcommand{\op}[1]{{\operatorname{ #1}}}
\renewcommand{\phi}{\varphi}
\renewcommand{\epsilon}{\varepsilon}
\renewcommand{\phi}{\varphi}
\newcommand{\mathsym}[1]{{}}
\newcommand{\unicode}[1]{{}}
\title{
Mannheim-d'Ocagne-Koenderink type
formulas 
for asymptotic directions
}
\author{Toshizumi Fukui}
\address[Toshizumi Fukui]
{Department of Mathematics,
Saitama University,
255, Shimo-Okubo,
Saitama, 338-8570,
Japan}
\email{tfukui@rimath.saitama-u.ac.jp}
\author{Atsufumi Honda}
\address[Atsufumi Honda]
{Department of Applied Mathematics, 
Yokohama National University,
Yokohama 240-8501, Japan}
\email{honda-atsufumi-kp@ynu.ac.jp}
\author{Masaaki Umehara}
\address[Masaaki Umehara]{
  Department of Mathematical and Computing Sciences,
  Institute of Science Tokyo,
  2-12-1-W8-34, O-okayama Meguro-ku,
  Tokyo 152-8552, Japan}
\email{umehara@comp.isct.ac.jp}
\keywords{{contour line, }{asymptotic direction, }{Mannheim-d'Ocagne-Koenderink's formula}}
\subjclass[2020]{53A10, 53B30; 35M10}
\begin{document}

\maketitle

\begin{abstract}
We consider a surface embedded in the Euclidean $3$-space
and fix a tangential vector $\mb v$ at a given point $p$ on the surface.
In this paper, we first review a history of the formula obtained by 
Mannheim, d'Ocagne and Koenderink, which 
asserts that the Gaussian curvature of the 
surface at $p$
can be obtained if one 
knows
\lq\lq the normal curvature  at $p$
with respect to ${\mb v}$"
and \lq\lq the curvature of the contour line $\Gamma$ of the surface at $p$"
with respect to the orthogonal projection induced by $\mb v$.
Unfortunately, this formula does not work when $\mb v$ points in
an asymptotic direction.
When $\mb v$ is just the case,
we  give
analogues of the formula,
which include an invariant of 
cusp singular points of $\Gamma$.
\end{abstract}

\section{Introduction}
Let $f:U\to \R^3$ be an embedding, where $U$ is a simply connected
domain of $\R^2$.
Then we can fix a unit normal vector field $\nu$ of $f$
defined on $U$.
We fix $p\in U$ and a non-zero tangent vector
$\mb v\in T_pU$ at $p\in U$.
We set $V:=df_p(\mb v)$. 
We consider the following 
three mutually orthogonal planes in $\R^3$
passing through the point $f(p)$;
\begin{itemize}
\item the {\it tangential plane} $\mb T_p$ 
which is perpendicular to the unit normal vector $\nu_p$ of $f$ at $p$,
\item the plane $N_V$ spanned by $\nu_p$ and $V$, called the {\it normal plane
containing $V$},
 \item the {\it $V$-plane $\Pi_V$}, which is perpendicular to the vector $V$.
\end{itemize}
For example, if $f(x,y):=(x,y,x y)$, $p:=(0,0)$ and $\mb v:=(\partial/\partial y)_p$,
then $\mb T_p$ is the $xy$-plane, $N_V$ is the $yz$-plane and
$\Pi_V$ is the $xz$-plane.

Let $\pi_{V}:\R^3\to \Pi_{V}$ be the orthogonal
projection to the $V$-plane.
We denote by $K_p$ $($resp. $\kappa_p(\mb v))$ 
the Gaussian curvature $($resp. the normal curvature
with respect to the direction $\mb v$) of $f$ at $p$.
If $\kappa_p(\mb v)\ne 0$, then 
the image of the projection $g:=\pi_{V}\circ f$
is bounded by a regular curve $\Gamma$ in 
the plane $\Pi_V$,
which is called the {\it contour line} of $f$ 
with respect to $V$.
For example, if we consider an elliptic paraboloid
or a hyperbolic paraboloid given by
$$
f_{\pm}(x,y):=(x,y,2x^2\pm y^2),
$$
then by setting $p:=(0,0)$ and $V:=(0,1,0)$,
the figures of $f_\pm$ and
$g_\pm=\pi_{V}\circ f_\pm$ are
indicated in Fig.~\ref{fig:1}.

\begin{figure}[h!]
\begin{center}
\includegraphics[height=3.5cm]{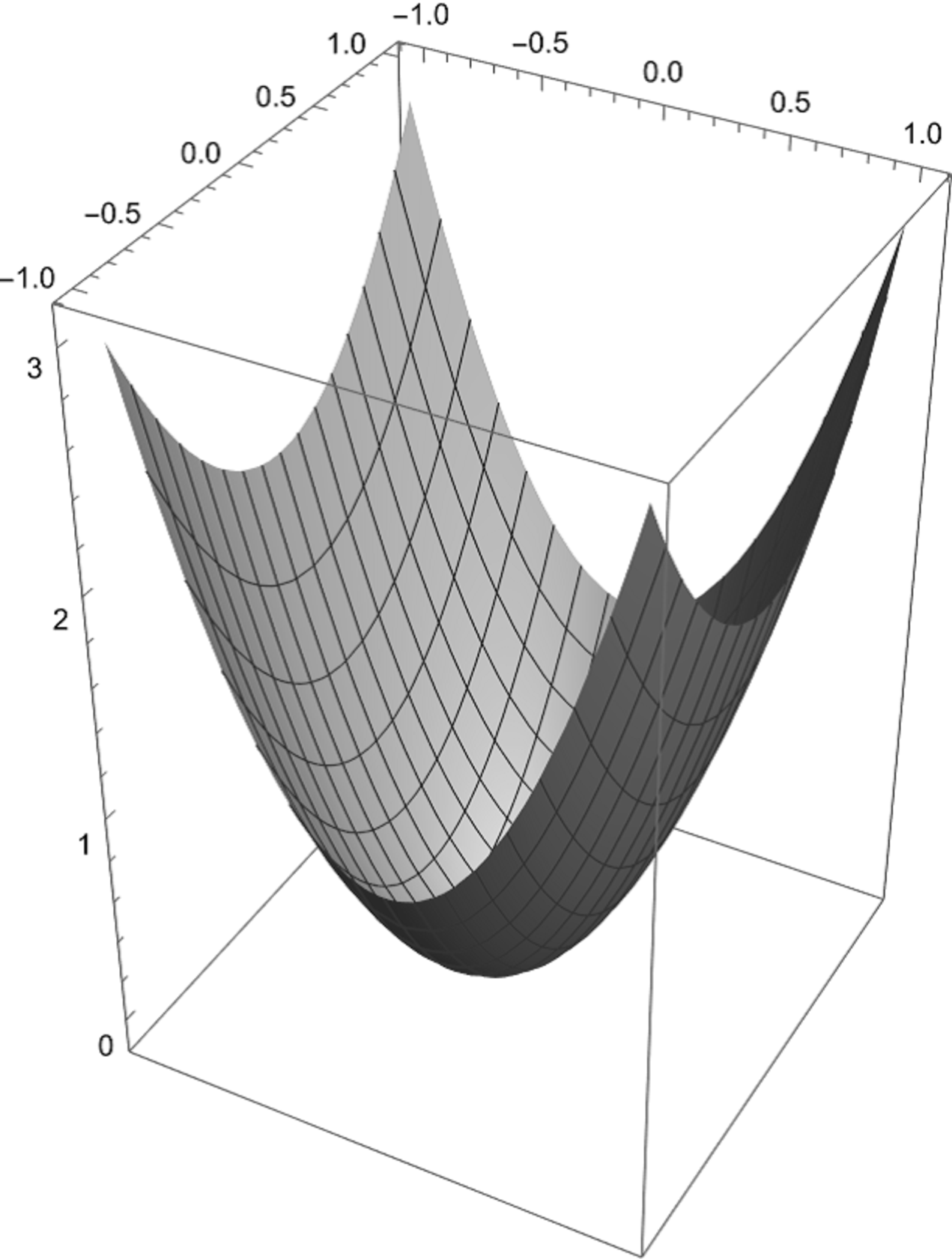}\qquad
\includegraphics[height=3.4cm]{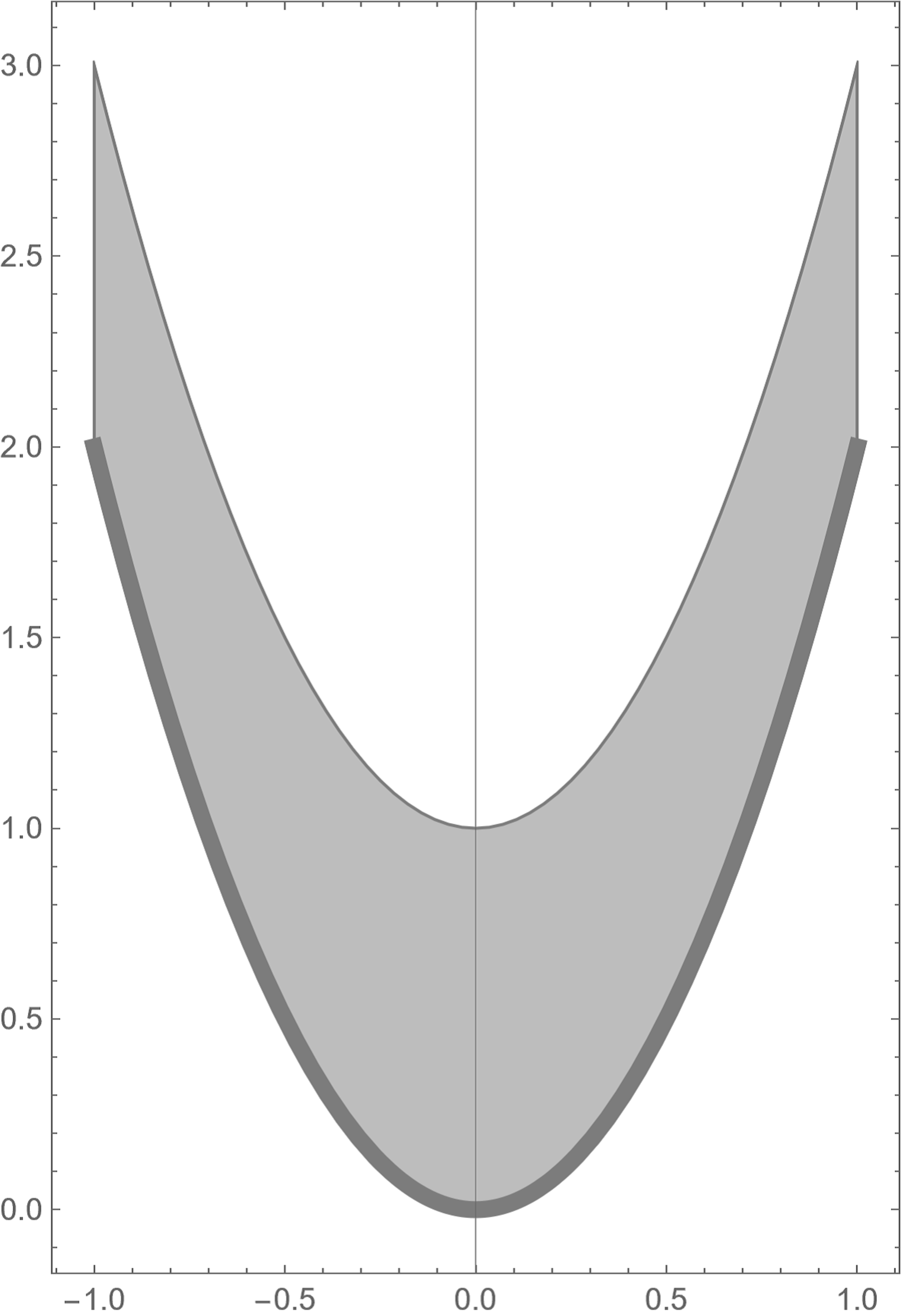}\quad \quad
\includegraphics[height=3.5cm]{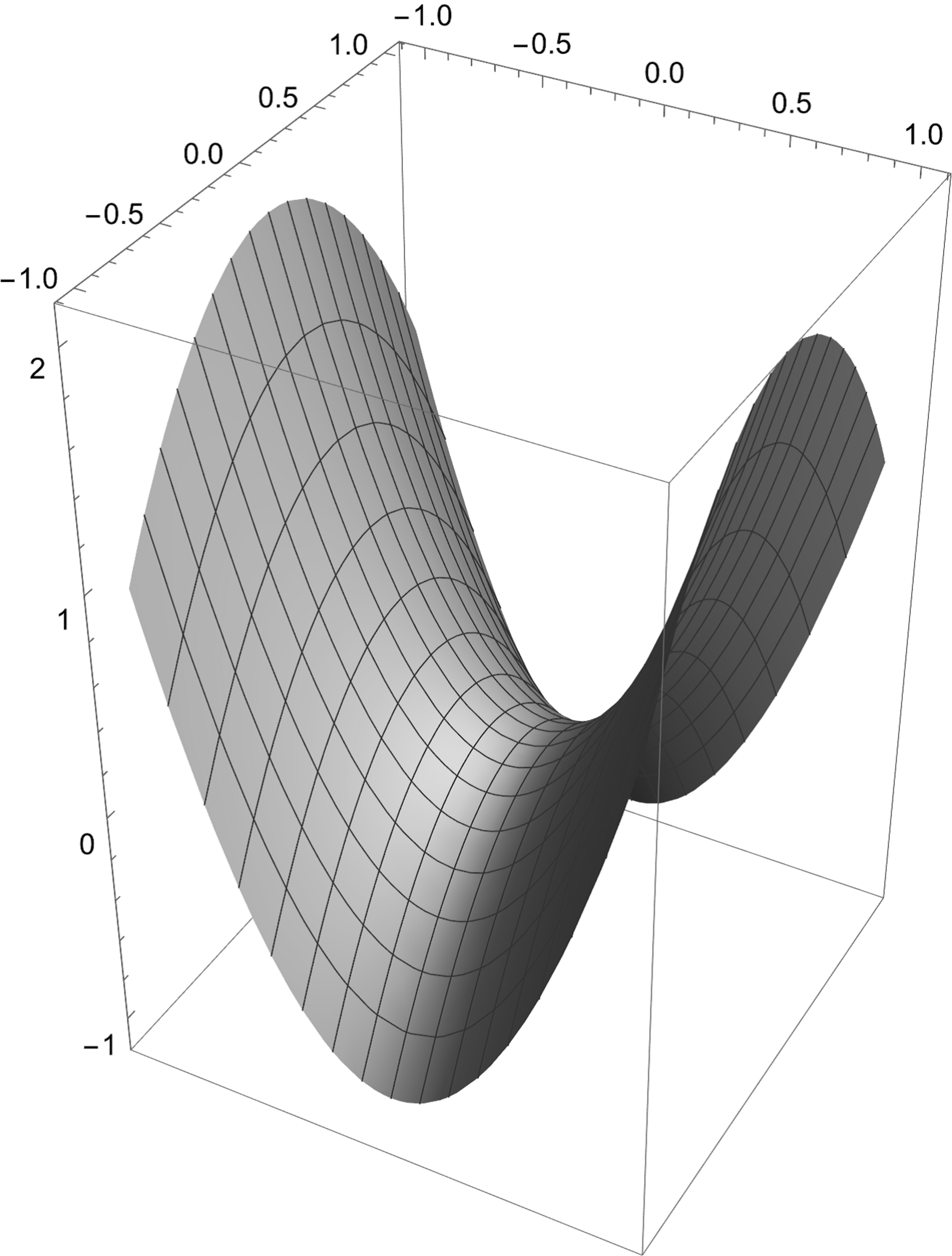}\quad
\includegraphics[height=3.4cm]{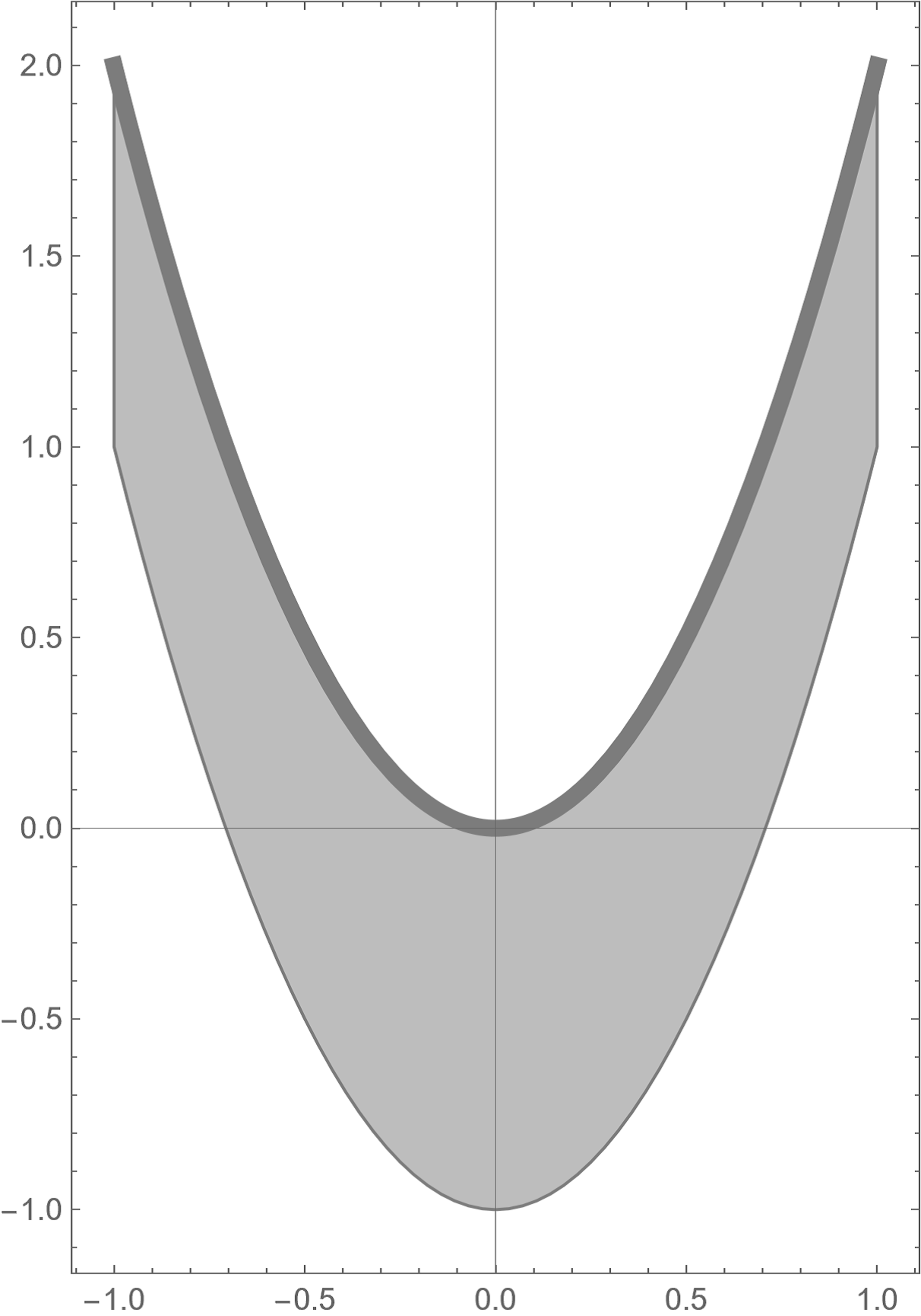}
\end{center}
\caption{The image of $f_+$ and its projection $g_+$ (left)
 and the image of $f_-$ and its projection $g_-$ (right)
}  \label{fig:1}
\end{figure}

It is well-known that 
the normal curvature $\kappa_p(\mb v)$  of
the surface $f$ at $p$ with respect to the tangential direction
$\mb v$ is written as
\begin{equation}\label{eq:226}
\kappa_p(\mb v)=
\lambda_1(p)\cos^2\phi+
\lambda_2(p)\sin^2\phi,
\end{equation}
where $\lambda_1$ and $\lambda_2$ are
principal curvature functions and
$\phi$ is the angle between the tangential direction $\mb v$
and the principal direction with respect to $\lambda_1$,
which was established by Euler in 1760.

We now assume that the Gaussian curvature $K_p$ of $f$ at $p$
does not vanish. 
In 1866, Mannheim \cite[Pages 307--310]{M0} gave the following formula of
the curvature radius $R_p$ of 
the contour line  
$\Gamma$ at $g(p)=\pi_{V}\circ f(p)$;
$$
R_p=
\left|
\frac1{\lambda_1(p)}\sin^2\phi+
\frac1{\lambda_2(p)}\cos^2\phi
\right|,
$$
which is also found in his book \cite{M}.
This formula can be considered as the dual of 
\eqref{eq:226} 
(cf. Blaschke \cite[Page 118]{B},
see also Kruppa \cite{Kr} 
where the duality is 
explained from a more geometric point of view). 
In fact, 
the geodesic curvature $\mu(p)=\pm 1/R_p$ of $\Gamma$ at $g(p)$
of the contour line  $\Gamma$ at $g(p)=\pi_{V}\circ f(p)$
satisfies
\begin{align*}
\mu(p)\kappa_p(\mb v)
=
\pm \lambda_1(p)\lambda_2(p)=\pm K_p.
\end{align*}
In particular,
\begin{equation}\label{eq:271}
|K_p|=|\mu(p)\kappa_p(\mb v)|
\end{equation}
holds.  Although Mannheim  might recognize this,
 d'Ocagne \cite{Oc} is the first person who 
pointed out \eqref{eq:271}.
Moreover, he gave a new proof of it.
 After that, in 1985, 
Koenderink \cite{K} rediscovered it
and pointed out the convexity and concavity of
the projection. 
In fact, he actually wrote in \cite{K} that
{\it a convexity of the contour corresponds 
to a convex patch of the surface, and a concavity to a saddle-shaped patch}.
He also explained this subject in his book \cite{K2} 
from the view point of geometric shape generation.
In other word, he found the formula
\begin{equation}\label{eqKE}
K_p=\mu(p)\kappa_p(\mb v),
\end{equation}
where $\mu(p)$ is positive (resp. negative)
when the image of $g:=\Pi_V\circ f$ is
locally convex (resp. concave) at $p$.
So, we call the formula
the {\it Mannheim-d'Ocagne-Koenderink formula}.
A modern proof of \eqref{eqKE}
is given in \cite[Chapter 4]{SUY}.

\begin{Remark}
In almost all recent papers, this formula was cited as
Koenderink's formula.
In fact, the authors also used this naming, but very recently,
Professor Farid Tari informed us of the work of
d'Ocagne. 
In Anjyo-Kabata \cite{AK}, this formula was called
d'Ocagne formula.
\end{Remark}

The formula does not apply when 
$\mb v$ is an asymptotic direction.
In just such a case, 
$\Gamma$ has a singularity. Using an 
information at the singular point,
we will give similar formulas (cf. Theorem \ref{thm:292}).
For example, if 
$
f_0(x,y):=(x,y,xy)
$
 and $V:=(0,1,0)$,
then the singular set of
$g_0:=\pi_{V}\circ f_0$ is the $y$-axis
and the image of the singular points of $g_0$ is just 
the origin $(0,0)$ (see Fig.~\ref{fig:2}, top row).
However, since the singular points of $f_0$ are degenerate 
into a single point, we consider the 
following map including the perturbation term $y^3$.
$$
f_1(x,y):=(x,y,xy+y^3).
$$
If we set $V:=(0,1,0)$,
then the singular set of
$g_1:=\pi_{V}\circ f_1$ is 
the parabola $x=-3y^2$ in the domain of 
definition of $f_1$
and so the contour line of $f_1$ is given by
$$
\Gamma(y):=g_1(-3y^2,y)=(-3y^2,-2y^3),
$$
 which is
a plane curve in the $xz$-plane having a cusp singular point at $o:=(0,0)$
(cf. Fig.~\ref{fig:2}, bottom row).
The origin $o$ gives a Whitney cusp $($see the appendix$)$
of the map $g_1$.

\begin{figure}[h!]
\begin{center}
\includegraphics[height=4.0cm]{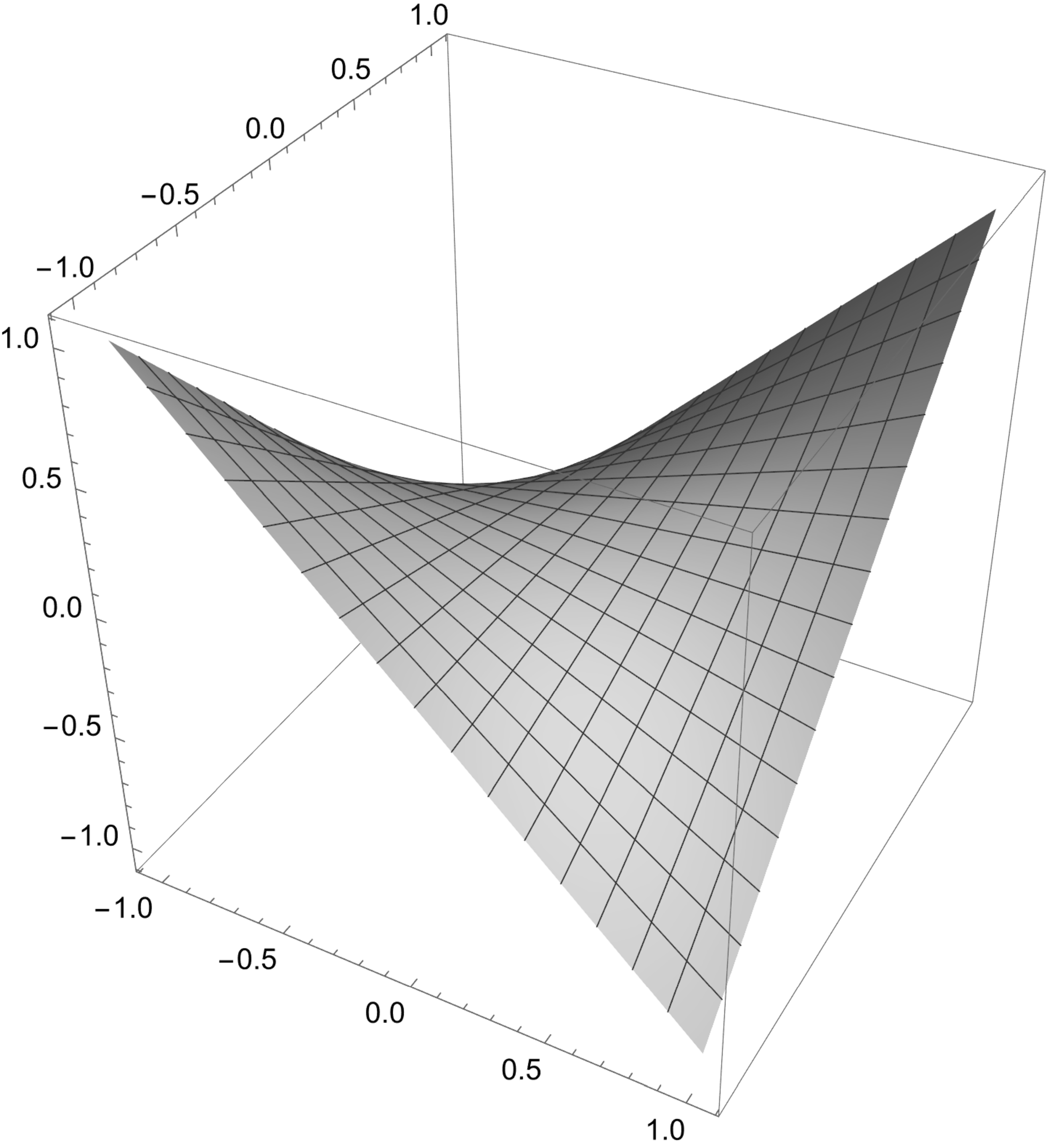}\qquad
\includegraphics[height=3.7cm]{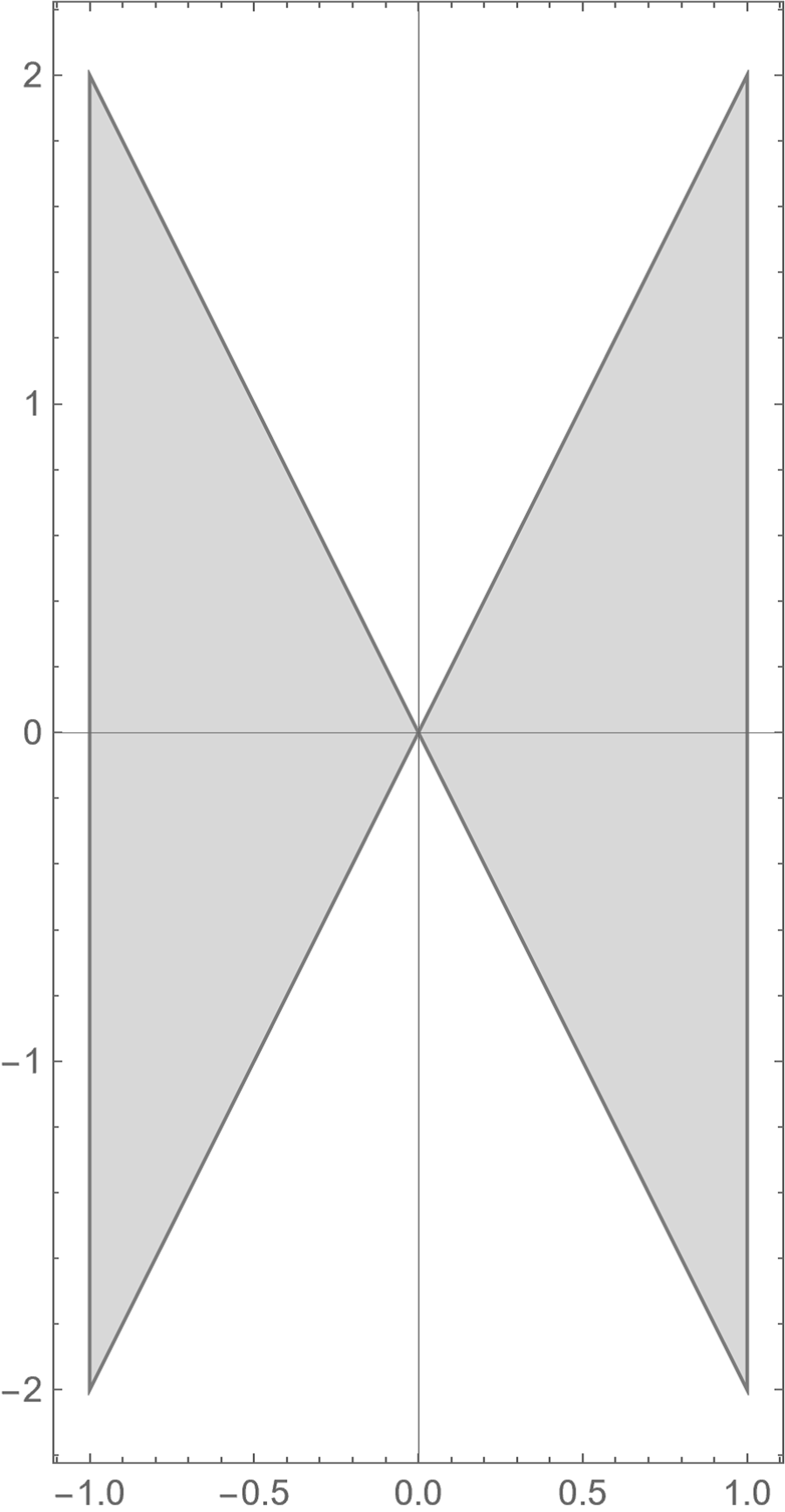}\qquad \qquad
\includegraphics[height=4.0cm]{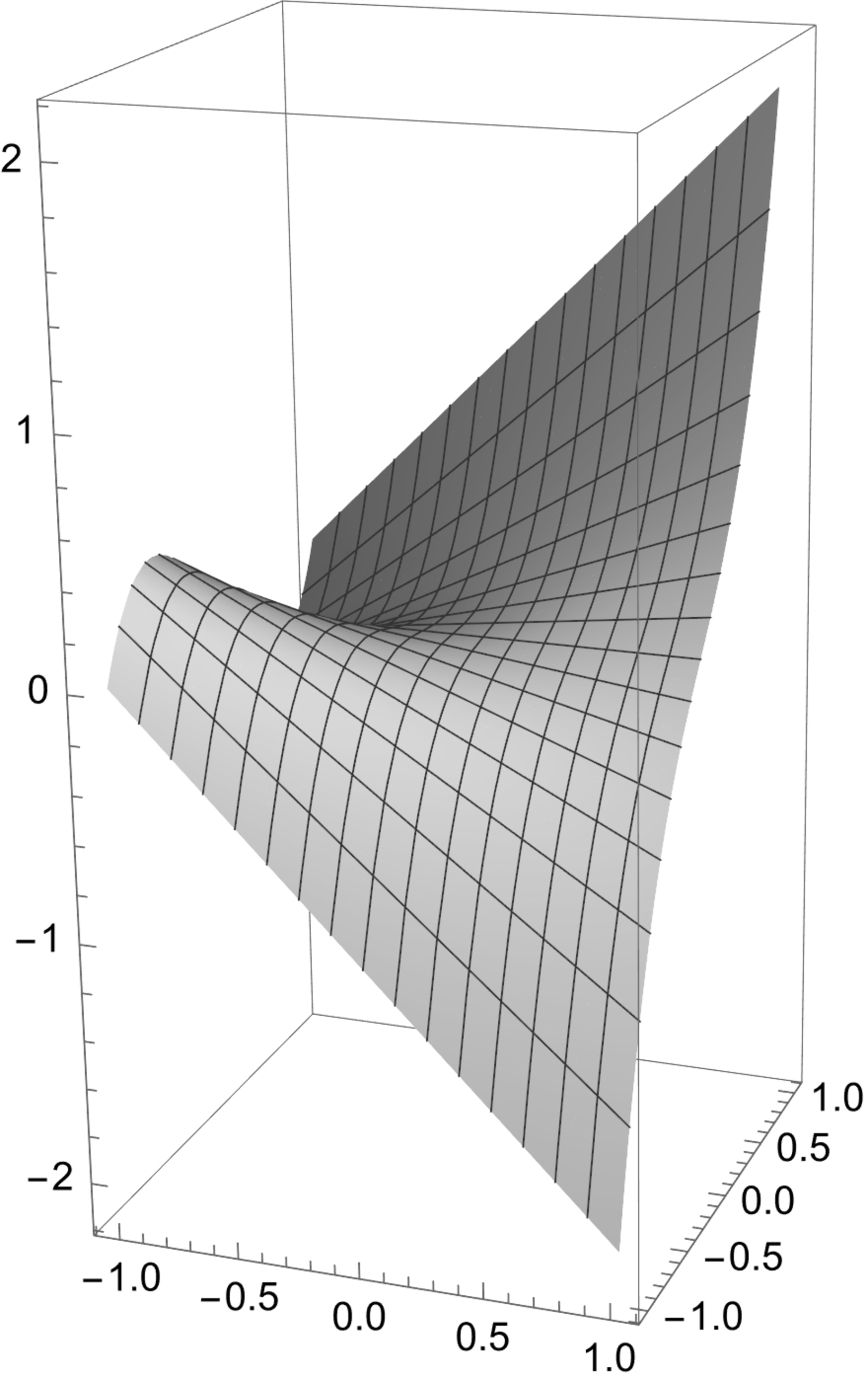}\qquad
\includegraphics[height=3.7cm]{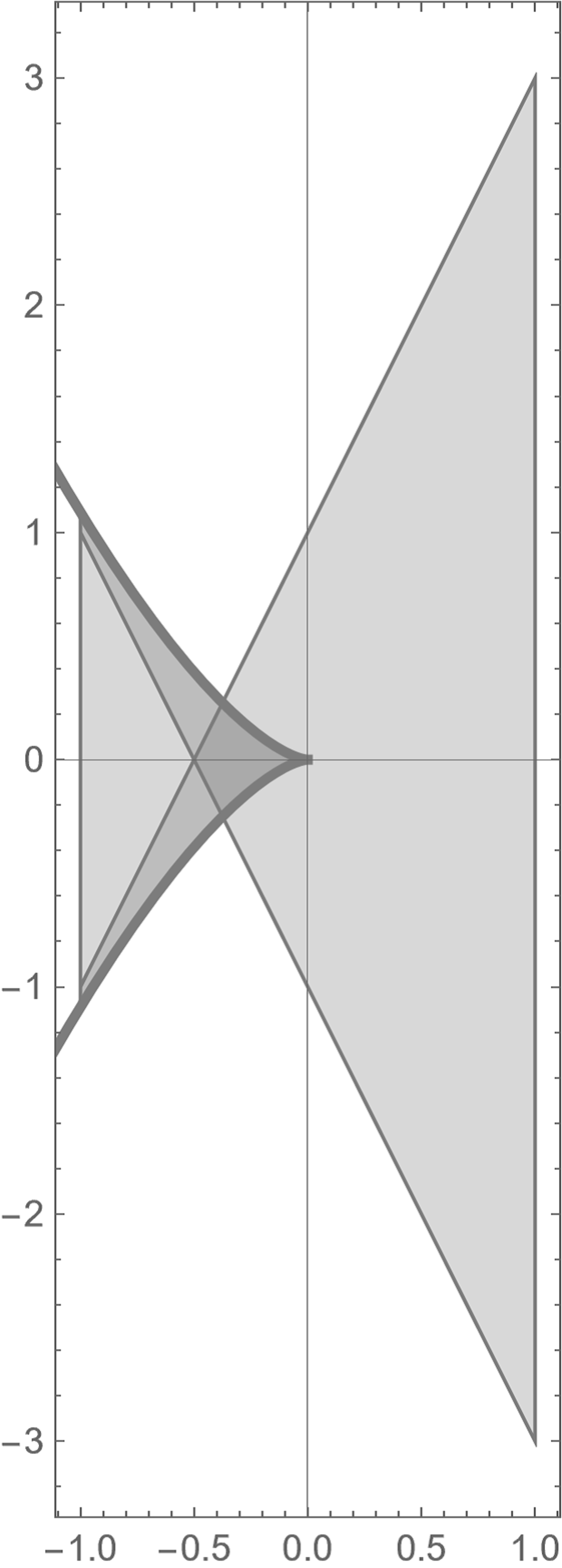}
\end{center}
\caption{The image of $f_0$ and its projection $g_0$ (left)
 and the image of $f_1$ and its projection $g_1$ (right)
} 
\label{fig:2}
\end{figure}

We consider an embedding $f:U\to \R^3$ defined on a domain $U$ in the
$uv$-plane $\R^2$. We fix $p\in U$ at which 
the Gaussian curvature $K_p$ is negative.
Since $f$ is an embedding, we often identify a point of $U$ with its image 
by $f$ and a tangent vector to $U$ with a tangent vector to the image, 
for the sake of simplicity.
Let $\mb v\in T_pU$ be
a tangent vector of $f$ at $p$ giving an asymptotic direction.

\begin{Definition}
We call the section $\mc S_p$ of the image of $f$
by the normal plane $N_V$ {\it the asymptotic normal section} at $p$. 
Then, $\mc S_p$ is the curve with an
inflection point at $f(p)$.
The derivative of 
the curvature function of the curve $\mc S_p$
with respect to the arc-length parameter is denoted by 
$\rho_p(\mb v)$ and is called {\it vertical torsion} at $p$
with respect to the asymptotic direction $\mb v$.
\end{Definition}

By definition, the vertical torsion $\rho_p(\mb v)$ gives
the information of the surface at $p$ with respect to the
normal plane $N_V$.
We may assume that $f(p)$ is the origin in $\R^3$
and the vector 
$$
\mb e_3:=(0,0,1)\qquad 
(\text{resp.}\quad \mb e_2:=(0,1,0))
$$
points in the normal direction of $f$ at $p$
(resp. the same direction as $df_p(\mb v)$).
Then $N_V$ is the $yz$-plane and the graph of one of the cubic 
polynomials $z=\pm \rho_p(\mb v) y^3/6$ gives the
best approximation of $\mc S_p$ at $y=0$
in the $yz$-plane among cubic polynomials in $y$.

\begin{Definition}
Let $I$ be an open interval containing the origin of $\R$
and $\sigma(t)$ ($t\in I$)
a regular curve in the domain $U(\subset \R^2)$ satisfying $\sigma(0)=p$.
Then $f\circ \sigma$ (resp. $\sigma$)
is called an {\it asymptotic tangential curve}
(resp. a {\it generator of asymptotic tangential curve})
at $p$ (with respect to $\mb v$)
if $f\circ \sigma(I)$ lies in the tangent 
plane $\mb T_p$ of $f$ passing through $f(p)$
satisfying $\sigma'(0)=\mb v$, where $\sigma'=d\sigma/dt$.
We give the orientation of $\mb T_p$
so that $(\mb v, \nu_p\times \mb v)$ gives
a positive frame.
Since $\sigma$ is a regular curve in the oriented plane $\mb T_p$,
we can define $\alpha_p(\mb v)$ by
the curvature function of 
the curve $\sigma$ at $p$ as a plane curve,
which is called the
{\it horizontal curvature} at $p$ with respect
to the asymptotic direction $\mb v$.
\end{Definition}

By definition, 
the horizontal curvature $\alpha_p(\mb v)$ gives
the information of the surface at $p$ with respect to the
tangential plane $\mb T_p$.
On the other hand, there exists a regular curve $\tau:I\to U$ 
defined on an open interval $I$ containing the origin in $\R$
satisfying the following:
\begin{enumerate}
\item $\tau(0)=p$ and $\tau'(0)$ is a positive scalar multiplication of $\mb v$,
and
\item for each $t\in I$, the velocity vector $\tau'(t)$ points 
in the asymptotic direction at $\tau(t)\in U$ of $f$. 
\end{enumerate}
The curve $\tau$ is called the {\it asymptotic curve} of $p$ with respect to
the asymptotic direction $\mb v$.

\begin{Definition}
We set $\hat \tau(t):=f\circ \tau(t)$. 
The curvature $\beta_p(\mb v)(>0)$ 
(resp. the torsion $\delta_p(\mb v)(\in \R)$)
of the space curve $\hat \tau$ at $t=0$ is called
the {\it asymptotic curvature} (resp. {\it the asymptotic torsion}) of
$f$ at $p$ with respect to the asymptotic direction $\mb v$.
\end{Definition}

It holds the following remarkable property
(cf. \cite[Chapter~4]{Sp}):

\begin{Fact}[Beltrami-Enneper]\label{prop:B}
$|\delta_p(\mb v)|$  
coincides with $\sqrt{-K_p}$.
\end{Fact}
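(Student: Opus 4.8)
The plan is to express the asymptotic torsion $\delta_p(\mb v)$, which is by definition the Frenet torsion of the space curve $\hat\tau=f\circ\tau$, in terms of the second fundamental form of $f$, and then to eliminate the direction angle by means of Euler's formula \eqref{eq:226}. Parametrise $\hat\tau$ by arc length $s$ with $\hat\tau(0)=f(p)$, write $\mb t:=\hat\tau'$ for the unit tangent, let $\nu$ be the unit normal of $f$ along the curve, and set $\mb u:=\nu\times\mb t$, so that $(\mb t,\mb u,\nu)$ is the Darboux frame of $\hat\tau$. Its structure equations read
\begin{equation*}
\mb t'=\kappa_{\pt{g}}\,\mb u+\kappa_{\pt{n}}\,\nu,\qquad
\nu'=-\kappa_{\pt{n}}\,\mb t-\tau_{\pt{g}}\,\mb u,
\end{equation*}
where $\kappa_{\pt{n}}$ is the normal curvature, $\kappa_{\pt{g}}$ the geodesic curvature, and $\tau_{\pt{g}}$ the geodesic torsion. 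The crucial input is that $\tau$ is an asymptotic curve: by property (2) of its defining conditions, $\tau'(t)$ is an asymptotic direction at $\tau(t)$ for every $t$, so the second fundamental form of $f$ vanishes on $\mb t$, and hence $\kappa_{\pt{n}}\equiv 0$ along $\hat\tau$.

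First I would use $\kappa_{\pt{n}}\equiv 0$ to identify the Frenet frame with the Darboux frame. From $\mb t'=\kappa_{\pt{g}}\mb u$ the principal normal is $\mb n=\op{sgn}(\kappa_{\pt{g}})\,\mb u$ and the curvature equals $\beta_p(\mb v)=|\kappa_{\pt{g}}|$, which is positive by hypothesis, so the Frenet frame is well defined at $t=0$. Geometrically this says that the osculating plane of $\hat\tau$ is the tangent plane $\mb T_p$, and consequently the binormal is $\mb b=\mb t\times\mb n=\pm(\mb t\times\mb u)=\pm\nu$. Differentiating and comparing the Frenet equation $\mb b'=-\delta_p(\mb v)\,\mb n$ with $\mb b'=\pm\nu'=\mp\tau_{\pt{g}}\,\mb u$ then gives $|\delta_p(\mb v)|=|\tau_{\pt{g}}|$; that is, the torsion of the asymptotic space curve agrees, up to sign, with the geodesic torsion.

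Next I would compute $\tau_{\pt{g}}$ from the shape operator $S$ of $f$. Taking the principal directions as coordinate axes of $T_pU$ and letting $\phi$ be the angle between $\mb v$ and the first principal direction, we have $\mb t=(\cos\phi,\sin\phi)$, $\mb u=(-\sin\phi,\cos\phi)$ and $S=\op{diag}(\lambda_1,\lambda_2)$, so that
\begin{equation*}
\kappa_{\pt{n}}=\inner{S\mb t}{\mb t}=\lambda_1\cos^2\phi+\lambda_2\sin^2\phi,\qquad
\tau_{\pt{g}}=\inner{S\mb t}{\mb u}=(\lambda_2-\lambda_1)\sin\phi\cos\phi.
\end{equation*}
The first identity is Euler's formula \eqref{eq:226}. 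Since $K_p=\lambda_1\lambda_2<0$ we have $\lambda_1\ne\lambda_2$, and the asymptotic condition $\kappa_{\pt{n}}=0$ forces $\sin^2\phi\cos^2\phi=-\lambda_1\lambda_2/(\lambda_1-\lambda_2)^2$. Substituting this into $\tau_{\pt{g}}^2=(\lambda_1-\lambda_2)^2\sin^2\phi\cos^2\phi$ cancels the factor $(\lambda_1-\lambda_2)^2$ and leaves $\tau_{\pt{g}}^2=-\lambda_1\lambda_2=-K_p$, whence $|\delta_p(\mb v)|=|\tau_{\pt{g}}|=\sqrt{-K_p}$, as asserted.

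The step I expect to be the main obstacle is the first one: establishing $\kappa_{\pt{n}}\equiv 0$ along the whole curve and making the identification $\mb b=\pm\nu$ rigorous. One must check that being asymptotic at every $\tau(t)$, not merely at $p$, is what genuinely annihilates the normal component of $\mb t'$ and thereby pins the osculating plane of $\hat\tau$ to $\mb T_p$; and one must confirm that $\beta_p(\mb v)>0$ really does keep the Frenet frame from degenerating, so that the torsion is defined at $t=0$. Once the frame identification is secured, the remaining argument is purely algebraic, and the sign ambiguities are harmless since only $|\delta_p(\mb v)|$ is claimed.
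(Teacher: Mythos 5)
Your proof is correct, but it follows a genuinely different route from the paper's. You prove the fact invariantly: you use the Darboux frame along the asymptotic curve, observe that $\kappa_{\pt{n}}\equiv 0$ (which is exactly condition (2) in the paper's definition of an asymptotic curve, so this step is not actually a gap), conclude that the Frenet binormal is $\pm\nu$ and hence that the Frenet torsion coincides up to sign with the geodesic torsion $\tau_{\pt{g}}=\inner{S\mb t}{\mb u}$, and finally eliminate the angle $\phi$ using Euler's formula together with the asymptotic condition $\lambda_1\cos^2\phi+\lambda_2\sin^2\phi=0$ to get $\tau_{\pt{g}}^2=-\lambda_1\lambda_2=-K_p$. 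This is essentially the classical Beltrami--Enneper argument (the one behind the paper's citation of Spivak), and your algebra checks out; your worries about rigor are easily discharged, since $\beta_p(\mb v)>0$ and continuity keep the curvature nonzero near $t=0$, and $\mb t\times(\nu\times\mb t)=\nu$ pins down the binormal. The paper instead proceeds by a direct computation in Monge form: writing $f(x,y)=(x,y,\phi(x,y))$ in adapted coordinates with $\phi_{yy}(o)=0$, parametrizing the asymptotic curve as $\tau(y)=(x(y),y)$, and computing $\hat\tau'$, $\hat\tau''$, $\hat\tau'''$ at $0$; differentiating the asymptotic-curve equation gives $x''(0)=-\phi_{yyy}(o)/(2\phi_{xy}(o))$, whence $\delta_p(\mb v)=-\phi_{xy}(o)$ and $\beta_p(\mb v)=|\phi_{yyy}(o)/(2\phi_{xy}(o))|$, and the fact follows from $K_p=-\phi_{xy}(o)^2$. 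What each approach buys: yours is coordinate-free, identifies the torsion of an asymptotic curve with the geodesic torsion (a stronger statement than the absolute-value identity), and makes the role of Euler's formula transparent; the paper's computation, while less conceptual, produces the explicit expressions for $\beta_p(\mb v)$ and $\delta_p(\mb v)$ in terms of $\phi_{xy}(o)$ and $\phi_{yyy}(o)$ that are reused immediately afterwards to prove $2\beta_p(\mb v)|\delta_p(\mb v)|=|\rho_p(\mb v)|$, Proposition \ref{prop:A}, and ultimately Theorem \ref{thm:292}, so the coordinate machinery is not wasted effort there.
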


We will give a proof of the fact in Section 3. 
The two invariants $\beta_p(\mb v)$ and $\delta_p(\mb v)$
are related to the invariants $\alpha_p$ and $\rho_p$ as follows:

\begin{Proposition}\label{prop:A}
Let $f:U\to \R^3$ be an embedding and
$p$ a point on $U$ at which the Gaussian 
curvature $K_p$ of $f$ is negative.
Let $\mb v\in T_pU$ be a tangent vector of $f$  at $p$ 
giving an asymptotic direction. Then
$$
|\alpha_p(\mb v)|=\frac23\beta_p(\mb v),\quad
|\delta_p(\mb v)|\beta_p(\mb v)=\frac{|\rho_p(\mb v)|}2
$$
holds. In particular $($cf. Fact \ref{prop:B}$)$, the following formula holds
\begin{equation}\label{eq:307}
K_p=-\frac{\rho_p(\mb v)^2}{9\alpha_p(\mb v)^2}.
\end{equation}
\end{Proposition}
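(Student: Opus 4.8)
The plan is to work in a single adapted Monge chart and then extract every invariant from a handful of low-order Taylor coefficients. First I would translate and rotate so that $f(p)$ is the origin, $\nu_p=\mb e_3$, and $df_p(\mb v)$ is a positive multiple of $\mb e_2$, writing the surface near $p$ as a graph $z=h(x,y)$ with $h(0,0)=h_x(0,0)=h_y(0,0)=0$. I abbreviate the $2$-jet of $h$ as $\tfrac12(ax^2+2bxy+cy^2)$ and its $3$-jet as $\tfrac16(p_{30}x^3+3p_{21}x^2y+3p_{12}xy^2+p_{03}y^3)$. Because $\mb v$ is asymptotic, the normal curvature in the $\mb v$-direction vanishes, which forces $c=h_{yy}(0,0)=0$; hence $K_p=ac-b^2=-b^2$, and the hypothesis $K_p<0$ gives $b\neq 0$. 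In this chart $N_V$ is the $yz$-plane and $\mb T_p$ is the $xy$-plane.

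I would first treat the two planar invariants. The asymptotic normal section $\mc S_p$ is the graph $z=h(0,y)=\tfrac16 p_{03}y^3+O(y^4)$; its curvature vanishes at $p$ since $h_{yy}(0,0)=0$, and as the arc-length parameter agrees with $y$ to first order, the arc-length derivative of the curvature at $p$ equals $\partial_y^3 h(0,y)|_{y=0}=p_{03}$, so $|\rho_p(\mb v)|=|p_{03}|$. For the horizontal curvature I would describe the generator as the branch of $h(x,y)=0$ tangent to $\mb e_2$, i.e. $x=\gamma y^2+O(y^3)$; inserting this into $bxy+\tfrac16 p_{03}y^3+\cdots=0$ gives $\gamma=-p_{03}/(6b)$, and since the generator is a planar curve with $x'(0)=0$ its curvature is $|x''(0)|=|2\gamma|=|p_{03}|/(3|b|)$, whence $|\alpha_p(\mb v)|=|p_{03}|/(3|b|)$.

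The heart of the proof is the asymptotic curve $\hat\tau=f\circ\tau$. Parametrizing $\tau$ by $y$ as $x=\hat\gamma y^2+O(y^3)$ (so that $x(0)=x'(0)=0$), the null condition $h_{xx}(x')^2+2h_{xy}x'+h_{yy}=0$ reduces at lowest order to $(4b\hat\gamma+p_{03})y=0$, giving $\hat\gamma=-p_{03}/(4b)$. The decisive observation is that this coefficient differs from the tangential one $\gamma=-p_{03}/(6b)$ precisely by the factor $\hat\gamma/\gamma=3/2$, and this is the source of the factor $2/3$ in the first identity. A short computation then gives $\hat\tau(y)=(\hat\gamma y^2+\cdots,\,y,\,(b\hat\gamma+\tfrac16 p_{03})y^3+\cdots)$ with $z$-coefficient $b\hat\gamma+\tfrac16 p_{03}=-p_{03}/12$. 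Evaluating the Frenet quantities $\beta=|\hat\tau'\times\hat\tau''|/|\hat\tau'|^3$ and $\delta=\det(\hat\tau',\hat\tau'',\hat\tau''')/|\hat\tau'\times\hat\tau''|^2$ at $y=0$, where $\hat\tau'(0)=(0,1,0)$ and $\hat\tau''(0)=(2\hat\gamma,0,0)$, I obtain $\beta_p(\mb v)=|2\hat\gamma|=|p_{03}|/(2|b|)$ and $\delta_p(\mb v)=-b$, recovering Fact \ref{prop:B} since $|\delta_p(\mb v)|=|b|=\sqrt{-K_p}$. I expect the main obstacle to be this low-order expansion of the asymptotic ODE together with the sign and orientation bookkeeping needed to pass from the signed Frenet data to the absolute values in the statement; I note that the third-order coefficient of $x(y)$ never enters, as it is annihilated by the cross product $\hat\tau'\times\hat\tau''$, which keeps the computation short.

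It remains to assemble the pieces. From $|\alpha_p(\mb v)|=|p_{03}|/(3|b|)$ and $\beta_p(\mb v)=|p_{03}|/(2|b|)$ I read off $|\alpha_p(\mb v)|=\tfrac23\beta_p(\mb v)$, and from $|\delta_p(\mb v)|=|b|$ I get $|\delta_p(\mb v)|\beta_p(\mb v)=|p_{03}|/2=|\rho_p(\mb v)|/2$. Finally, eliminating $|p_{03}|$ and $b$ using $|\rho_p(\mb v)|=|p_{03}|$, $|\alpha_p(\mb v)|=|p_{03}|/(3|b|)$ and $K_p=-b^2$ yields $\rho_p(\mb v)^2/\alpha_p(\mb v)^2=9b^2=-9K_p$, that is $K_p=-\rho_p(\mb v)^2/\big(9\alpha_p(\mb v)^2\big)$, as claimed.
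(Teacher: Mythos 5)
Your proposal is correct and follows essentially the same route as the paper: an adapted Monge chart $z=h(x,y)$ with $h_{yy}(0,0)=0$, computation of the asymptotic curve's second-order coefficient from the null condition (your $\hat\gamma=-p_{03}/(4b)$ is the paper's $x''(0)=-\phi_{yyy}(o)/(2\phi_{xy}(o))$), the tangential-section coefficient from $h=0$ (your $\gamma=-p_{03}/(6b)$ is the paper's $\xi''(0)=-\phi_{yyy}(o)/(3\phi_{xy}(o))$), and Frenet formulas giving $\beta=|p_{03}|/(2|b|)$, $\delta=-b$, after which the identities follow by elimination exactly as in the paper. The only differences are notational (power-series ansatz versus implicit differentiation), so there is nothing substantive to add.
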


This proposition will be proved 
in Sections 3 and 4.
Since $K_p$ 
is written as a product of the invariants 
with respect to the planes $N_V$ and $\mb T_p$,
\eqref{eq:307} can be seen as a
variant of the Mannheim-d'Ocagne-Koenderink
formula for asymptotic directions 
as well as the other two formulas 
\eqref{eq:307a} 
and
\eqref{eq:307b}
which we will discuss later.

We  next consider a smooth map
$
\Gamma:(-\epsilon,\epsilon)\to \R^2 \,\, (\epsilon>0)
$ 
giving a cusp at $t=0$. Then the value
\begin{equation}\label{eq:305}
\mu_\Gamma:=\frac{\det(\Gamma''(0),\Gamma'''(0))}{|\Gamma''(0)|^{5/2}}
\end{equation}
is called the {\it cuspidal curvature} of $\Gamma$ at $t=0$,
which is an invariant of $\Gamma$ at the cusp $t=0$, where \lq\lq $\det$"
denotes the determinant when each component is considered a column vector. 
It is well-known that  the absolute value of the curvature $\kappa(t)$ ($t\ne 0$)
at a point $\Gamma(t)$ is the inverse of the radius of the
best approximated circle at the point.
The above invariant $\mu_\Gamma$
is introduced by 
imitating the property of $\kappa(t)$,
that is, the absolute value $\mu_\Gamma$ at the cusp of $\Gamma$
is equal to $\sqrt{a}$
when the best approximated cycloid at $\Gamma(0)$
the cycloid obtained by rolling a circle of radius $a$ ($a>0$).
(cf. \cite[Example 1.3.13]{SUY}).
The relationship between $\mu_\Gamma$ and the behavior of $\kappa(t)$
is explained in \cite[Theorem 1.1]{SU}.
The following is the main result:

\begin{Theorem}\label{thm:292}
Let $f:U\to \R^3$ be an embedding and
$p$ a point on $U$ at which the Gaussian 
curvature $K_p$ of $f$ is negative. 
If $\mb v\in T_pU$ is a non-zero vector 
giving an asymptotic direction of $f$ at $p$,
then the following four assertions are
equivalent: 
\begin{enumerate}
\item The horizontal curvature $\alpha_p(\mb v)$ does not vanish.
\item The vertical torsion $\rho_p(\mb v)$ does not vanish.
\item The contour line $\Gamma$ of 
$g:=\pi_{V}\circ f$
gives a cusp in the plane $\Pi_{V}$
$($like as in Fig.~\ref{fig:2}, right$)$,
where $V:=df_p(\mb v)$.
\item The point $p$ is a Whitney cusp $($see the appendix$)$
of the map $g$.
\end{enumerate}
Moreover, let $\omega_p(\mb v)(:=\mu_\Gamma)$ be the
cuspidal curvature of $\Gamma$ at $\pi_{V}\circ f(p)$. 
Under the assumption that one of the above these three conditions hold, 
the following two equations hold;
\begin{align}
\label{eq:307a}
K_p^3&=-\frac{\rho_p(\mb v)^2 \omega_p(\mb v)^4}{16}, \\
\label{eq:307b}
K_p&=-\frac{3}4 |\alpha_p(\mb v)|\omega_p(\mb v)^2.  
\end{align}
\end{Theorem}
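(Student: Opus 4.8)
The plan is to reduce the whole statement to one local normal form and to read off all four invariants, together with the contour line, from the low-order Taylor coefficients of $f$. First I would choose adapted coordinates so that $f(p)$ is the origin of $\R^3$, $\nu_p=\mb e_3$, and $df_p(\mb v)$ is a positive multiple of $\mb e_2$; then $f$ is a graph $f(x,y)=(x,y,h(x,y))$ with
\[
h(x,y)=\tfrac{a}{2}x^2+bxy+\tfrac16 h_{yyy}(0,0)\,y^3+(\text{remaining terms of order }\ge 3),
\]
where $a=h_{xx}(0,0)$, $b=h_{xy}(0,0)$, and the absence of a $y^2$-term is exactly the condition that $\mb v$ is asymptotic, i.e. $h_{yy}(0,0)=0$. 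Here $K_p=-b^2$, so $K_p<0$ forces $b\ne0$ and $\sqrt{-K_p}=|b|$. Reading off the asymptotic normal section $\mc S_p=\{(0,y,h(0,y))\}$ gives $\rho_p(\mb v)=h_{yyy}(0,0)$, and combining the two product relations of Proposition \ref{prop:A} with Fact \ref{prop:B} (so that $\beta_p(\mb v)=|\rho_p(\mb v)|/(2|b|)$) yields
\[
|\alpha_p(\mb v)|=\frac{|\rho_p(\mb v)|}{3|b|}.
\]
Since $b\ne0$, both $\alpha_p(\mb v)$ and $\rho_p(\mb v)$ vanish precisely when $h_{yyy}(0,0)=0$; this already proves the equivalence of (1) and (2).

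Next I would compute the contour line explicitly. As $V=\mb e_2$, the projection is $g(x,y)=\pi_{V}\circ f(x,y)=(x,h(x,y))$, whose singular set is $\{h_y=0\}$; since $\nabla h_y(0,0)=(b,0)\ne(0,0)$ this is a regular curve through $p$, which I solve as $x=-\frac{\rho_p(\mb v)}{2b}y^2+O(y^3)$. Substituting this into $g$ and using the partial cancellation $bxy+\tfrac16\rho_p(\mb v)y^3=(-\tfrac12+\tfrac16)\rho_p(\mb v)y^3+\cdots$ gives the contour line
\[
\Gamma(y)=\Big(-\tfrac{\rho_p(\mb v)}{2b}\,y^2+O(y^3),\;-\tfrac{\rho_p(\mb v)}{3}\,y^3+O(y^4)\Big),
\]
which for $h=xy+y^3$ reproduces the curve $(-3y^2,-2y^3)$ of the introduction. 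From this parametrization $\Gamma'(0)=0$, $\Gamma''(0)=(-\rho_p(\mb v)/b,0)$, and $\det(\Gamma''(0),\Gamma'''(0))=2\rho_p(\mb v)^2/b$, so $\Gamma$ has an ordinary ($3/2$-)cusp at $\pi_{V}\circ f(p)$ precisely when $\rho_p(\mb v)\ne0$; this gives the equivalence of (2) and (3). The equivalence with (4) then follows from the Whitney-cusp criterion recalled in the appendix, since the leading data above put the germ of $g$ in the normal form $(x,y)\mapsto(x,xy+y^3)$ exactly when $\rho_p(\mb v)\ne0$.

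It remains to compute the cuspidal curvature and to assemble the two identities. I would observe that $\Gamma'''(0)$ has the form $(\ast,-2\rho_p(\mb v))$, in which the unknown first entry $\ast$ (coming from the $O(y^3)$-term of the first component) drops out of the determinant, so that by \eqref{eq:305},
\[
\omega_p(\mb v)^2=\frac{\det(\Gamma''(0),\Gamma'''(0))^2}{|\Gamma''(0)|^{5}}=\frac{4|b|^{3}}{|\rho_p(\mb v)|}=\frac{4(-K_p)^{3/2}}{|\rho_p(\mb v)|}.
\]
Substituting this into the right-hand sides and using $K_p=-b^2$ together with $|\alpha_p(\mb v)|=|\rho_p(\mb v)|/(3|b|)$ reduces \eqref{eq:307a} to $-b^6=-b^6$ and \eqref{eq:307b} to $-b^2=-b^2$, completing the proof. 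The main obstacle is the second step: one must expand the contour line to exactly the order at which its $y^2$- and $y^3$-coefficients are determined and keep track of the cancellation producing the factor $-\tfrac13$; the reward is that $\omega_p(\mb v)$ depends only on $b$ and $\rho_p(\mb v)$, the remaining Taylor coefficients of $h$ being irrelevant. A secondary point is to match the sign conventions in the definitions of $\rho_p(\mb v)$ and $\alpha_p(\mb v)$ with the chosen orientation of $\Gamma$, but since only $\rho_p(\mb v)^2$ and $|\alpha_p(\mb v)|$ enter \eqref{eq:307a} and \eqref{eq:307b}, these signs do not affect the conclusion.
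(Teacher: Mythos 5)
Your proposal is correct and follows essentially the same route as the paper's proof: adapted graph coordinates $f(x,y)=(x,y,h(x,y))$ with $h_{yy}(o)=0$, the identifications $K_p=-h_{xy}(o)^2$ and $\rho_p(\mb v)=h_{yyy}(o)$, solving $h_y=0$ for the singular set to get $\Gamma''(0)=(-\rho_p/b,0)$ and $\det(\Gamma''(0),\Gamma'''(0))=2\rho_p^2/b$, the Whitney criterion via $J=h_y$, and the same algebraic assembly of \eqref{eq:307a} and \eqref{eq:307b}. The only cosmetic differences are that you expand the contour line with explicit $O$-terms where the paper differentiates the implicit relation $\phi_y(\psi(y),y)=0$, and you import $|\alpha_p(\mb v)|=|\rho_p(\mb v)|/(3|b|)$ from Proposition \ref{prop:A} and Fact \ref{prop:B} exactly as the paper does via \eqref{eq:C2} and \eqref{eq:665b}.
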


We have noted that
\eqref{eq:307}
is the formula of $K_p$
with respect to the planes $N_V$ and $\mb T_p$.
On the other hand, 
the first formula \eqref{eq:307a} is related to the planes $N_V$ and $\Pi_V$,
and the second formula \eqref{eq:307b} 
is related to the planes $\mb T_p$ and $\Pi_V$.
Hence,
all of these three formulas can be considered 
as analogues of the Mannheim-d'Ocagne-Koenderink formula 
with respect to asymptotic directions.
Moreover,
\eqref{eq:307a} and \eqref{eq:307b}
represent the Gaussian curvature $K_p$
based on the geometric invariants of
the asymptotic direction of the surface before projection
and $\omega_p(\mb v)$.
We remark that a different 
formula giving a geometric meaning of $K_p/\omega_p(\mb v)$
has been shown in \cite{FHS}. 

\begin{Remark}
By \eqref{eq:307}, \eqref{eq:307a} and \eqref{eq:307b},
the four invariants
$K_p$ $\alpha_p(\mb v)$, $\omega_p(\mb v)$ and
$\rho_p(\mb v)$ are determined if any two of them are given. 
\end{Remark}

\section{General Settings}
We fix an embedding $f:U\to \R^3$
and a point $p\in U$.
Without loss of generality,
we may assume $p:=o$ ($o:=(0,0)$).
Then we can write
$
f(x,y)=(x,y,\phi(x,y))
$ 
where $\phi(x,y)$ is a smooth function
satisfying
\begin{equation}\label{eq:348}
p=(0,0),\qquad
\phi(o)=\phi_x(o)=\phi_y(o)=0,
\end{equation}
where $\phi_x:=\partial \phi/\partial x$
and $\phi_y:=\partial \phi/\partial y$.
We suppose that 
$
\mb v:=\partial/\partial y
$
is the asymptotic direction at $p$.
Then, we have
\begin{equation}\label{eq:383}
\phi_{yy}(o)=0.
\end{equation}

\begin{Proposition}\label{prop:386}
$K_p=-\phi_{xy}(o)^2$ holds.
\end{Proposition}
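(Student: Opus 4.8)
The plan is to compute $K_p$ directly from the graph parametrization $f(x,y)=(x,y,\phi(x,y))$ via the classical formula for the Gaussian curvature of a graph, and then to exploit the normalizations \eqref{eq:348} together with the asymptotic direction condition \eqref{eq:383}.

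First I would record the coordinate vector fields $f_x=(1,0,\phi_x)$ and $f_y=(0,1,\phi_y)$, together with the unnormalized normal $(-\phi_x,-\phi_y,1)$, so that the unit normal is $\nu=(-\phi_x,-\phi_y,1)/W$ with $W:=\sqrt{1+\phi_x^2+\phi_y^2}$. The coefficients of the first fundamental form are then $E=1+\phi_x^2$, $F=\phi_x\phi_y$, $G=1+\phi_y^2$, giving $EG-F^2=W^2$. Since $f_{xx}=(0,0,\phi_{xx})$, $f_{xy}=(0,0,\phi_{xy})$, $f_{yy}=(0,0,\phi_{yy})$, the coefficients of the second fundamental form are $L=\phi_{xx}/W$, $M=\phi_{xy}/W$, $N=\phi_{yy}/W$.

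Next I would assemble the Gaussian curvature via $K=(LN-M^2)/(EG-F^2)$, which yields
\[
K=\frac{\phi_{xx}\phi_{yy}-\phi_{xy}^2}{(1+\phi_x^2+\phi_y^2)^2}.
\]
Evaluating at $p=o$, the conditions $\phi_x(o)=\phi_y(o)=0$ from \eqref{eq:348} make the denominator equal to $1$, so $K_p=\phi_{xx}(o)\phi_{yy}(o)-\phi_{xy}(o)^2$. Finally, the asymptotic direction hypothesis \eqref{eq:383}, namely $\phi_{yy}(o)=0$, annihilates the first term and leaves $K_p=-\phi_{xy}(o)^2$.

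This argument is entirely computational, so there is no substantial obstacle; the only point requiring care is to confirm that $\mb v=\partial/\partial y$ being asymptotic is exactly encoded by $\phi_{yy}(o)=0$. This holds because the normal curvature in the direction $\partial/\partial y$ equals $N/G$, which at $o$ reduces to $\phi_{yy}(o)$ (as $G=W=1$ there), so it vanishes precisely when $\phi_{yy}(o)=0$. Since this is already recorded as \eqref{eq:383}, I may invoke it directly and conclude.
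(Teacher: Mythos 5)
Your proof is correct and follows essentially the same route as the paper: both evaluate the graph formula $K_p=\phi_{xx}(o)\phi_{yy}(o)-\phi_{xy}(o)^2$ (valid at $o$ since $\phi_x(o)=\phi_y(o)=0$) and then apply the asymptotic condition \eqref{eq:383}. The only difference is that you spell out the standard first/second fundamental form computation and the normal-curvature justification of \eqref{eq:383}, which the paper takes as known.
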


\begin{proof}
By
\eqref{eq:383},
we have 
\begin{equation}\label{eq:532}
K_p=\phi_{xx}(o)\phi_{yy}(o)-\phi_{xy}(o)^2=-\phi_{xy}(o)^2,
\end{equation}
proving the assertion.
\end{proof}
We remark that
\begin{equation}
N(x,y):=(-\phi_x(x,y),-\phi_y(x,y),1)
\end{equation}
gives a normal vector field of $f$. 
Since the geodesic curvature $\kappa(x)$ of the regular
curve of the $xy$-plane given by
the graph $y=ax^3$ at $x=0$
satisfies $d\kappa /ds(0)=6a$ (where $s$ is the arclength parameter),
the following assertion is obtained:

\begin{Proposition}\label{prop:R}
The vertical torsion $\rho_p(\mb v)$ 
of $f$ at $p$ with respect to the
asymptotic direction $\mb v$ is given by
$\rho_p(\mb v)=\phi_{yyy}(o)$.
\end{Proposition}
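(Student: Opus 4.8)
The plan is to work entirely in the coordinates fixed in Section 2 and reduce the curvature computation for the asymptotic normal section to the elementary cubic-graph computation recorded just before the statement. First I would make the geometric data explicit. Since $\phi_x(o)=\phi_y(o)=0$, the normal $\nu_p$ is parallel to $N(o)=(0,0,1)=\mb e_3$ and $V=df_p(\partial/\partial y)=(0,1,0)=\mb e_2$; hence the normal plane $N_V=\op{span}(\nu_p,V)$ is exactly the $yz$-plane. The asymptotic normal section $\mc S_p$ is the intersection of the image of $f$ with $N_V$, i.e. the set of points $(0,y,\phi(0,y))$, which I regard as the graph $z=h(y)$ with $h(y):=\phi(0,y)$ inside the oriented $yz$-plane.

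Next I would extract the relevant jet of $h$. The normalizations \eqref{eq:348} give $h(0)=h'(0)=0$, the asymptotic condition \eqref{eq:383} gives $h''(0)=\phi_{yy}(o)=0$, and $h'''(0)=\phi_{yyy}(o)$. Thus $o$ is an inflection point of $\mc S_p$, consistent with the definition, and the Taylor expansion is $h(y)=\tfrac{\phi_{yyy}(o)}{6}\,y^3+O(y^4)$, so the best cubic approximation of $\mc S_p$ at $y=0$ is the graph $z=a y^3$ with $a=\phi_{yyy}(o)/6$.

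Finally I would compute $\rho_p(\mb v)=(d\kappa/ds)(0)$ for the plane curve $y\mapsto(y,h(y))$. Using the signed curvature of a graph, $\kappa=h''/(1+h'^2)^{3/2}$, and differentiating, every term carrying a factor of $h'(0)$ or $h''(0)$ drops out at the origin because both vanish; the surviving term is $\kappa'(0)=h'''(0)$, and since $ds/dy=\sqrt{1+h'(0)^2}=1$ we obtain $(d\kappa/ds)(0)=h'''(0)=\phi_{yyy}(o)$. Equivalently, one may simply invoke the identity recorded before the statement: for the graph $y=a x^3$ one has $(d\kappa/ds)(0)=6a$, and substituting $a=\phi_{yyy}(o)/6$ yields $\phi_{yyy}(o)$.

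The computation is routine; the only points needing care are bookkeeping ones. The main thing to pin down is the orientation of $N_V$ (and hence the sign of $\kappa$), so that the identity holds with $+\phi_{yyy}(o)$ rather than its negative, which is the source of the $\pm$ appearing in the definition of vertical torsion; note that $(d\kappa/ds)(0)$ is invariant under reparametrization but changes sign with the plane orientation. The other point is the observation that $(d\kappa/ds)(0)$ depends only on the $3$-jet of $h$, which is precisely what legitimizes replacing $\mc S_p$ by its cubic approximation.
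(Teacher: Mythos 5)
Your proposal is correct and follows essentially the same route as the paper: the paper's proof consists precisely of observing that $\mc S_p$ is the graph $z=\phi(0,y)$ in the $yz$-plane with $3$-jet $\phi_{yyy}(o)\,y^3/6$ and invoking the recorded fact that the graph $y=ax^3$ has $d\kappa/ds(0)=6a$. Your version merely fills in the details the paper leaves implicit (the direct verification via $\kappa=h''/(1+h'^2)^{3/2}$ and the orientation/sign bookkeeping), which is a sound elaboration rather than a different argument.
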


\section{Asymptotic curvatures and asymptotic torsions}
Let $\tau$ be an asymptotic curve passing through $o$ 
with respect to $\mb v$.
We can write 
$$
\tau(y)=(x(y),y),\qquad x'(0)=0.
$$
If we set $\hat \tau(y):=f\circ \tau(y)$, then 
\begin{align*}
\hat \tau'&=(x',1,\phi_x x'+\phi_y), \\
\hat \tau''&=(x'',0,\phi_x x'' +\phi_{xx} x'^2+2\phi_{xy}x'+\phi_{yy}).
\end{align*}
Moreover, since $x'(0)=0$ and $\phi_x(o)=0$, we have
\begin{equation}\label{eq:405}
\hat \tau'''(0)=(x'''(0),0, 3x''(0)\phi_{xy}(o)+\phi_{yyy}(o)).
\end{equation}
By \eqref{eq:348} with $x'(0)=0$, we have
$\hat \tau'(0)=(0,1,0)$.
Moreover, by \eqref{eq:383}, we have
$\hat \tau''(0)=(x''(0),0,0)$.
So  
\begin{equation}\label{eq:A}
\beta_p(\mb v)=|x''(0)|
\end{equation}
is obtained. By \eqref{eq:405}, we also have 
\begin{align*}
&\det(\hat \tau'(0),\hat \tau''(0),\hat \tau'''(0)) 
=-x''(0) \Big(3x''(0)\phi_{xy}(o)+\phi_{yyy}(o)\Big),
\end{align*}
which implies 
\begin{equation}\label{eq:B}
\delta_p(\mb v)=
-\frac{3x''(0)\phi_{xy}(o)+\phi_{yyy}(o)}{x''(0)}.
\end{equation}
Since $\tau$ is an asymptotic curve, we have
\begin{align}
0=\hat\tau''\cdot N(x(y),y)
 =x'(2 \phi_{xy}+x' \phi_{xx})+\phi_{yy},
\end{align}
where the dot \lq\lq $\cdot$" means the
canonical inner product on $\R^3$.
After differentiating this equality, 
we use \eqref{eq:383} and $x'(0)=0$. 
Then we have
$$
0=2 \phi_{xy}(o)x''(0)+\phi_{yyy}(o),
$$
which implies
$x''(0)=-{\phi_{yyy}(o)}/{2 \phi_{xy}(o)}$.
By this with \eqref{eq:A} and \eqref{eq:B},
we obtain
\begin{equation}\label{eq:C2}
\beta_p(\mb v)=\left|\frac{\phi_{yyy}(o)}{2\phi_{xy}(o)}\right|,\qquad
\delta_p(\mb v)=-\phi_{xy}(o).
\end{equation}
By Propositions \ref{prop:386} and \ref{prop:R},
\eqref{eq:C2} is rewritten as
\begin{equation}\label{eq:C}
2\beta_p(\mb v)|\delta_p(\mb v)|=|\rho_{p}(\mb v)|,\qquad
|\delta_p(\mb v)|=\sqrt{-K_p}.
\end{equation}
In particular, Fact \ref{prop:B} and
the second assertion of
Proposition \ref{prop:A} and the equality
\begin{equation}\label{eq:618}
K_p=-\frac{\rho_p(\mb v)^2}{4\beta_p(\mb v)^2}
\end{equation}
are obtained.

\section{Proof of Proposition~\ref{prop:A}}
We next prove the remaining statement of Proposition~\ref{prop:A}:
The pair of asymptotic tangential curves is obtained 
as the section of the image of  $f$ by the $xy$-plane, 
that is, the union of the image of 
the two asymptotic tangential curves
coincide with the following set: 
$$
\tilde \Sigma:=\{(x,y)\in \R^2\,;\, \phi(x,y)=0\}.
$$
Let $\sigma(t)$ ($|t|<\epsilon$) be the 
generator of asymptotic tangential curve
of $f$ at $o$ in the $xy$-plane 
such that $\sigma(0)=o$
and $\sigma'(0)(\ne \mb 0)$ 
points in the $y$-axis.
So we may  write
$
\sigma(t)=(\xi(t),t)
$
where $\xi(t)$ is a smooth function 
defined around $t=0$.
By definition, we have $\xi'(0)=0$.
By the definition of $\sigma$, we have
$
\phi(\xi(t),t)=0,
$
and
\begin{align*}
& \phi_y+\phi_x\xi'=0, \,\,
 \phi_{yy}+\phi_x\xi''+2\xi'\phi_{xy}+
\phi_{xx}\xi'^2=0, \\
&
\phi_{yyy}+\phi_x\xi'''
+3\phi_{xy}\xi''+3\phi_{xx}\xi'\xi''+3\phi_{xyy}\xi'\\
&\phantom{aaaaaaaaaaaaaaaaaaaaa}+3\phi_{xxy}\xi'^2+\phi_{xxx}\xi'^3=0.
\end{align*}
The fact $\xi'(0)=0$ with
\eqref{eq:348} and \eqref{eq:383},
the third equation implies that
$3\xi''(0)=-{\phi_{yyy}(o)}/{\phi_{xy}(o)}$.
So we have
$$
\sigma'(0)=(0,1),\qquad
\sigma''(0)=\Big(-\frac{\phi_{yyy}(o)}{3\phi_{xy}(o)},0\Big),
$$
which imply
$$
\alpha_p(\mb v)=
\left. \frac{\op{det}(\sigma',\sigma'')}{|\sigma'|^3}\right|_{t=0} 
=\frac{\phi_{yyy}(o)}{3\phi_{xy}(o)}.
$$
By this with 
\eqref{eq:C2},
we have
\begin{equation}\label{eq:665b}
|\alpha_p(\mb v)|=
\frac23\beta_p(\mb v).
\end{equation}
This proves the first assertion of Proposition~\ref{prop:A}.
Moreover, by \eqref{eq:665b}
and \eqref{eq:618}, we obtain the third assertion of 
Proposition \ref{prop:A}.
Since we have proved the second assertion (at the end of Section~3), 
Proposition \ref{prop:A} is completely proved.

\section{Cuspidal curvatures of contour lines}
In this section,
we consider an embedding $f:U\to \R^3$
and set $g:=\pi_{V}\circ f$.
We fix $p\in U$.
Without loss of generality, we may assume
\eqref{eq:348}, 
\eqref{eq:383} and $K_p\ne 0$. 
So we may set $p:=(0,0)(=o)$.
Then we have
\begin{equation}\label{eq:g}
g(x,y)=(x,\phi(x,y)).
\end{equation}
Using this expression,
we compute the derivatives of the contour line (i.e. the
singular set of $g$):
We denote by
$J$ the Jacobian of $g$.
Since 
\begin{equation}\label{eq:Jg}
J=\phi_y,
\end{equation}
the fact
$\phi_{xy}(o)\ne 0$ implies that
there exists a smooth
 function $\psi(y)$ defined
around $y=0$ satisfying
$\psi(0)=0$
such that
$
\gamma(y):=(\psi(y),y)
$
gives the parametrization of the singular set 
of $g$
in the $xy$-plane.
Then 
$$
\Gamma(y):=g\circ \gamma(y)=(\psi(y),\phi(\psi(y),y))
$$
is the contour line of the image of $g$.
Since $\phi_y$ vanishes identically, we have
\begin{align*}
\Gamma'&=(\psi',\phi_x \psi'), \qquad
\Gamma''=\Big(\psi'',
\phi_x\psi''+\phi_{xy}\psi'+\phi_{xx}\psi'^2\Big), \\
\Gamma'''&=
\Big(
\psi''',
\phi_x\psi''' +2 \phi_{xy}\psi''+3 \phi_{xx}\psi'\psi'' \\
&\phantom{aaaaaaaasaaaaaaa}+\phi_{xxx}(\psi')^3+2 \phi_{xxy}\psi'^2
+\phi_{xyy}\psi'\Big).
\end{align*}
By \eqref{eq:348} with
$\psi'(0)=0$, we have
\begin{align*}
&\Gamma'(0)=(0,0),\quad
\Gamma''(0)=(\psi''(0),0), \\
&\Gamma'''(0)=(\psi'''(0),2\phi_{xy}(o)\psi''(0)).
\end{align*}

\begin{proof}[Proof of Theorem \ref{thm:292}]
Differentiating the identity
$
\phi_y(\psi(y),y)=0
$ 
twice, we have
\begin{align*}
&\phi_{xy}\psi'+\phi_{yy}=0, \\
&\phi_{xxy}\psi'^2+\phi_{xy}\psi''+
2\phi_{xyy}\psi'+
\phi_{yyy}=0.
\end{align*}
Since $\phi_{xy}(o)\ne 0$,
substitute $y=0$, we obtain
\begin{align*}
&\psi'(0)=0, \qquad
\psi''(0)=-\frac{\phi_{yyy}(o)}{\phi_{xy}(o)}.
\end{align*}
So, we have
\begin{align}\label{eq:G2G3}
\Gamma''(0)=\left(-\frac{\phi_{yyy}(o)}{\phi_{xy}(o)},0\right), \qquad
\Gamma'''(0)=\left(\psi'''(0),-2\phi_{yyy}(o)\right)
\end{align}
and 
$$
\det(\Gamma''(0),\Gamma'''(0))
=\frac{2{\phi_{yyy}(o)^2}}{\phi_{xy}(o)}.
$$
Since $\phi_{xy}(o)$ is non-zero,
$y=0$ is a cusp singular point of $\Gamma$ if and only if
$\phi_{yyy}(o)\ne 0$ (that is, $\omega_p(\mb v)\ne0$),
proving the equivalency of (a) and (c). 
On the other hand, \eqref{eq:C} implies 
the equivalency of (a) and (b).
The Jacobian of $g$ is given as \eqref{eq:Jg} under the setting
\eqref{eq:g}.
Since $K_p<0$,
\eqref{eq:532} implies $J_x(p)\ne 0$.
In particular, $p$ is a non-degenerate 
singular point (cf. Definition \ref{def:W})
of $g$.
Then, the condition that $p$ is a Whitney cusp for $g$
is equivalent to the condition 
\lq\lq$J_y(o)=0$ and $J_{yy}(o)\ne 0$" (cf. 
Fact~\ref{fact:W} in  the appendix),
which  can be rewritten as
\lq\lq$\phi_{yy}(o)=0$ and $\phi_{yyy}(0)\ne 0$".
By \eqref{eq:383}, $\phi_{yy}(o)=0$ holds.
So (d) holds if and only if $\psi_{yyy}(o)\ne 0$,
proving
the equivalency of (a) and (d).

We prove
\eqref{eq:307a} and \eqref{eq:307b}:
By
\eqref{eq:G2G3} and
\eqref{eq:305}, we have
\begin{align*}
|\omega_p(\mb v)|&(=|\mu_\Gamma|)=
2 \frac{\phi_{yyy}(o)^2}{|\phi_{xy}(o)|}
\frac{|\phi_{xy}(o)|^{5/2}}{|\phi_{yyy}(o)|^{5/2}} 
=2\frac{|\phi_{xy}(o)|^{3/2}}{|\phi_{yyy}(o)|^{1/2}}
\end{align*}
and
\begin{align*}
\rho_p(\mb v)^2\omega_p(\mb v)^4&=16\phi_{yyy}(o)^2\frac{|\phi_{xy}(o)|^{6}}{|\phi_{yyy}(o)|^2} 
=16\phi_{xy}(o)^{6}=-16K_p^3,
\end{align*}
proving \eqref{eq:307a}.
By \eqref{eq:C2} and \eqref{eq:665b},
we have
\begin{align*}
|\alpha_p(\mb v)|\omega_p(\mb v)^2
&=
\frac23\frac{|\phi_{yyy}(o)|}{2|\phi_{xy}(o)|} 
\frac{4|\phi_{xy}(o)|^{3}}{|\phi_{yyy}(o)|} 
=\frac43|\phi_{xy}(o)|^{2}=-\frac43 K_p,
\end{align*}
proving \eqref{eq:307b}.
\end{proof}

Our  map $g:=\pi_{V}\circ f$ 
($V:=df_p(\mb v)$)
admits a normal vector field $\nu:=(0,1,0)$, 
and $p\in U$ is a non-degenerate singular point of
the second kind as in  \cite{MSUY}.
Imitating the argument in  \cite[Page 272-273]{MSUY} for swallowtails,
one can show that
$|\omega_p(\mb v)|$ coincides with
the limiting singular curvature of the map $g$.

\begin{acknowledgements}
The authors thank 
Hans Havlicek, Kentaro Saji and Shintaro Akamine 
for valuable comments.
\end{acknowledgements}

\medskip
\section*{Appendix: Whitney cusps}

Consider a $C^\infty$-map
$g:(U;u,v)\to (\R^2;x,y)$. 
By using the Jacobian $J$ of $g$.
Then the singular set of $g$ is expressed as
the implicit function $J(u,v)=0$.

\begin{Definition}\label{def:W}
A singular point  $p\in U$ is called {\it non-degenerate}.
if  $(J_u,J_v)$ does not vanish at $p$.
Moreover, $p\in U$ is called a {\it Whitney cusp singular point})
if $g$ is right-left equivalent to the map germ 
$(u,v)\mapsto (u,v^3-3uv$) around $p$.
\end{Definition}

An example of Whitney cusp is given in 
the rightmost picture in Fig.~\ref{fig:2}.
The following fact is well-known (cf. \cite{W}):

\begin{Fact}\label{fact:W}
Let $p\in U$ be a point
satisfying $g_v(p)=\mb 0$.
Then $p$ is a 
 fold  $($resp. a Whitney cusp$)$
singular point if $p$ is non-degenerate
and $J_v(p)\ne 0$
$($resp. $J_v(p)=0$ and $J_{vv}(p)\ne 0)$.
\end{Fact}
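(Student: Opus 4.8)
The plan is to reduce $g$ to Whitney's standard models and read off the type from the normal form. Since $g_v(p)=\mb 0$ we have $\op{rank} dg_p\le 1$, and non-degeneracy forces $g_u(p)\ne\mb 0$: at $p$ the relations $g_v(p)=\mb 0$ and $J(p)=0$ give $J_u(p)=\det(g_u(p),g_{uv}(p))$ and $J_v(p)=\det(g_u(p),g_{vv}(p))$, so if $g_u(p)$ also vanished then $(J_u,J_v)(p)=\mb 0$, contradicting non-degeneracy. Hence $\op{rank} dg_p=1$. Aligning the image of $dg_p$ with the $x$-axis by a target diffeomorphism and then using $(g_1,v)$ as new source coordinates near $p$ (a local diffeomorphism because $g_{1,u}(p)\ne 0$), I would bring $g$ to the form $g(u,v)=(u,h(u,v))$ with $h_v(0,0)=0$; this is exactly the shape $g=(x,\phi)$ met in the application. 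Crucially, $g_v(p)=\mb 0$ makes this source diffeomorphism tangent to the identity along $\partial_v$ at $p$, which is what guarantees that the stated conditions on $J$ survive the reduction.

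In the normal form $J=h_v$, so $J_u=h_{uv}$, $J_v=h_{vv}$ and $J_{vv}=h_{vvv}$, and the kernel of $dg$ at every singular point is exactly $\langle\partial_v\rangle$. Thus the fold hypothesis $J_v(p)\ne 0$ reads $h_{vv}(0,0)\ne 0$, meaning $v\mapsto h(u,v)$ has a non-degenerate critical point for each small $u$. The implicit function theorem produces $c(u)$ with $h_v(u,c(u))=0$, and the Morse lemma with the parameter $u$ then gives a coordinate $\tilde v$, vanishing on $\{v=c(u)\}$, with $h(u,v)=h(u,c(u))+\tilde v^2$ up to sign. Absorbing the function $h(u,c(u))$ into the target coordinate yields $g\sim(u,\tilde v^2)$, the \emph{fold}.

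For the cusp hypothesis $J_v(p)=0$, $J_{vv}(p)\ne 0$, non-degeneracy forces $J_u(p)\ne 0$, and in the normal form these read $h_{vv}(0,0)=0$, $h_{vvv}(0,0)\ne 0$ and $h_{uv}(0,0)\ne 0$. Geometrically $J_v(p)=0$ says the null direction $\partial_v$ is tangent to the regular singular curve $\{h_v=0\}$, while $J_{vv}(p)\ne 0$ together with $J_u(p)\ne 0$ makes this tangency simple (first order). Analytically $v\mapsto h(0,v)$ has an $A_2$ (cubic) critical point, and $h_{uv}(0,0)\ne 0$ makes the one-parameter family $h(u,\cdot)$ a transverse, hence versal, unfolding of it. I would then invoke the Malgrange--Weierstrass preparation theorem to eliminate the remaining terms by source and target changes and reach $g\sim(u,v^3-3uv)$, the \emph{Whitney cusp}.

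The main obstacle is the cusp reduction: the preparation-theorem argument and the explicit diffeomorphisms that normalize $h$ so as to produce precisely the term $-3uv$ require care, and it is exactly here that both $J_{vv}(p)\ne 0$ and the induced $J_u(p)\ne 0$ are indispensable, the latter because without transversality the unfolding fails to be versal and $p$ would be a more degenerate singularity. The one other point needing attention, though routine, is the claim in the first step that the passage to $g=(u,h)$ preserves the vanishing or non-vanishing of $J_v(p)$ and $J_{vv}(p)$; this is checked by differentiating the coordinate change, where the hypothesis $g_v(p)=\mb 0$ (which makes the mixed derivative of the new source coordinate vanish at $p$) is precisely what forces the unwanted lower-order cross terms to drop out.
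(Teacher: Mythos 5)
Your overall strategy---reduce to the adapted form $g=(u,h(u,v))$, then obtain the fold from the Morse lemma with parameters and the cusp from versality/preparation---is the standard route (the paper itself gives no proof of this Fact, only the citation to Whitney), and your fold half is correct: the reduction only needs first-order data there, since $\tilde J_{\tilde v}(p)=J_v(p)\,a_{\tilde u}(p)$. The genuine gap is exactly at the step you dismiss as routine, namely the claim that the conditions $J_v(p)=0$, $J_{vv}(p)\ne 0$ survive the reduction because the source change is tangent to the identity along $\partial_v$ at $p$. First-order tangency at the single point $p$ controls only $dJ$; the second derivative of $J$ picks up second-order terms of the coordinate change. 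Writing the inverse source change as $\Psi(\tilde u,\tilde v)=(a(\tilde u,\tilde v),\tilde v)$ (after rotating the target so that $g_u(p)=(c,0)$, $c\ne 0$), one has $\tilde J=(J\circ\Psi)\,a_{\tilde u}$, and at $p$, using $J(p)=0$, $J_v(p)=0$ and $a_{\tilde v}(p)=0$ (the latter from $g_{1,v}(p)=0$),
\[
\tilde J_{\tilde v\tilde v}(p)=a_{\tilde u}(p)\Bigl(J_{vv}(p)+J_u(p)\,a_{\tilde v\tilde v}(p)\Bigr),
\qquad a_{\tilde v\tilde v}(p)=-\frac{g_{1,vv}(p)}{c}.
\]
In the cusp case non-degeneracy forces $J_u(p)\ne 0$, so the correction term is in general nonzero: the condition $J_{vv}(p)\ne0$ is \emph{not} invariant under your reduction, and your justification fails.

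Moreover the gap cannot be patched under the purely pointwise hypothesis $g_v(p)=\mb 0$, because the cusp half of the statement is then actually false. Take
\[
g(u,v)=\bigl(u+v^2,\ v^4+v^3+uv\bigr).
\]
Then $g_v(0,0)=\mb 0$, $J=u+v^2+4v^3$, so $(J_u,J_v)(0,0)=(1,0)$ (non-degenerate), $J_v(0,0)=0$ and $J_{vv}(0,0)=2\ne 0$; yet $g$ is the composition of $(U,V)\mapsto(U,V^4+UV)$ with the diffeomorphism $(u,v)\mapsto(u+v^2,v)$, and this germ is at most $2$-to-$1$ near the origin (for fixed $U$ the function $V\mapsto V^4+UV$ has a single critical point), whereas a Whitney cusp attains three preimages arbitrarily close to the singular point; so $g$ is not a Whitney cusp. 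What is true---what Whitney proves, and what the paper actually uses in Section 5---is the criterion computed with a vector field spanning $\ker dg$ along the whole singular curve near $p$; equivalently, one must assume that $g_v$ vanishes on $\{J=0\}$ near $p$, not only at $p$ (for $g=(x,\phi(x,y))$ this holds, since $g_y=(0,\phi_y)=(0,J)$). Under that hypothesis your reduction is saved: as $dJ_p\ne0$, the function $g_{1,v}$ vanishing on the regular curve $\{J=0\}$ can be written $g_{1,v}=\lambda J$ with $\lambda$ smooth, hence $g_{1,vv}(p)=\lambda(p)J_v(p)=0$ precisely in the cusp case, the correction term above dies, and the rest of your argument (parametrized Morse lemma; versal unfolding of $v^3$ via the preparation theorem) goes through. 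So the repair is to strengthen the hypothesis to \lq\lq$\partial_v$ spans the kernel along the singular set\rq\rq\ and to replace the \lq\lq tangent to the identity\rq\rq\ justification by the computation above.
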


\end{document}